\address{Department of Algebra, Faculty of Mathematics and Physics, Charles University in Prague, Sokolovsk\'a 83, 186 75 Praha, Czech Republic}
\email{shaul@karlin.mff.cuni.cz}
\newtheorem{thm}[equation]{Theorem}
\newtheorem*{thm*}{Theorem}
\newtheorem*{cor*}{Corollary}
\newtheorem*{dfn*}{Definition}
\newtheorem{cthm}{Theorem}
\newtheorem{cor}[equation]{Corollary}
\newtheorem{prop}[equation]{Proposition}
\theoremstyle{definition}
\newtheorem{rem}[equation]{Remark}
\newcommand{\inj}{\hookrightarrow}
\newcommand{\surj}{\twoheadrightarrow}
\newcommand{\opn}{\operatorname}
\newcommand{\cat}[1]{\operatorname{\mathsf{#1}}}
\newcommand{\mrm}[1]{\mathrm{#1}}
\newcommand{\projdim}{\operatorname{proj\,dim}}
\newcommand{\op}{\opn{op}}
\title{Gorenstein acyclic complexes and finitistic dimensions}
\author{Liran Shaul}
\thanks{{\em Mathematics Subject Classification} 2020:
16E10, 16E35, 18G25, 18G80}
\begin{document}
\begin{abstract}
Given a two-sided noetherian ring $A$ with a dualizing complex,
we show that the big finitistic dimension of $A$ is finite if and only if every bounded below Gorenstein-projective-acyclic cochain complex of Gorenstein-projective $A$-modules is contractible.
If $A$ is further assumed to be an Artin algebra, we also prove a Gorenstein variant of a theorem of Rickard, showing its finitistic dimension is finite in case its Gorenstein-injective derived category is generated by the Gorenstein-injective modules.
\end{abstract}

\numberwithin{equation}{section}
\maketitle

\setcounter{section}{-1}
\section{Introduction}

Gorenstein homological algebra is a rapidly developing relative version of homological algebra in which the projective and injective modules are replaced by the more general Gorenstein-projective and Gorenstein-injective modules. 
The subject was initiated by Auslander, 
who proved a Gorenstein analogue of the fact that a commutative noetherian local ring is regular if and only if every finitely generated module has finite projective dimension. He showed that a noetherian local ring is Gorenstein if and only if every finitely generated module has a finite resolution made of Gorenstein-projective modules.

This paper is dedicated to studying the big finitistic dimension of a ring $A$,
which is defined to be
\[
\opn{FPD}(A) = \sup_{M \in \opn{Mod}(A), \projdim_A(M) <\infty} (\projdim_A(M)).
\]
A major open problem in homological algebra, known as the big finitistic dimension conjecture states that if $A$ is an Artin algebra then $\opn{FPD}(A) < \infty$.
In this paper we give Gorenstein homological versions of recent results from \cite{Rickard,ShFinDim,ShGor,Zhang} concerning the question when $\opn{FPD}(A) < \infty$.
It is often the case that results in ordinary homological algebra have analogues in Gorenstein homological algebra. Finding such analogues in the case of finitistic dimension is particularly attractive, because a result of Holm \cite[Theorem 2.28]{Holm} states that for any ring $A$, 
the Gorenstein version of the finitistic dimension coincides with the usual definition of $\opn{FPD}(A)$.

To state our results, recall that the notion of an acyclic complex has Gorenstein versions in Gorenstein homological algebra. A cochain complex $X$ is called a $\mathcal{GP}$-acyclic complex if for any Gorenstein-projective $A$-module $G$, it holds that $\opn{Hom}_A(G,X)$ is acyclic.
Every $\mathcal{GP}$-acyclic complex is acyclic, but the converse fails.
This notion is recalled and studied in more detail in \cref{sec:prem} below.

Our main result, repeated as \cref{thm:main} in the body of the paper states:
\begin{cthm}
Let $A$ be a ring,
and consider the following statements:
\begin{enumerate}
\item $\opn{FPD}(A) < \infty$.
\item Every bounded below $\mathcal{GP}$-acyclic complex of Gorenstein-projective $A$-modules is contractible.
\item Every bounded below acyclic complex of projective $A$-modules is contractible.
\end{enumerate}
Then it always holds that (1) $\implies$ (2) $\implies$ (3).
If $A$ is two-sided noetherian, and has a dualizing complex,
then (3) $\implies$ (1),
so that all three conditions are equivalent.
\end{cthm}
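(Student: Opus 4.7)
\emph{Overall strategy.} My plan is to prove the three implications separately, with the main technical novelty in (1) $\implies$ (2). For the easier direction (2) $\implies$ (3), given a bounded below acyclic complex $X$ of projective $A$-modules, I would first verify that $X$ is automatically $\mathcal{GP}$-acyclic, and then invoke (2). Its cocycles $Z^n = Z^n(X)$ have finite projective dimension via the brutal truncation $0 \to X^0 \to \cdots \to X^{n-1} \to Z^n \to 0$, and since Gorenstein-projective modules are $\opn{Ext}$-orthogonal to modules of finite projective dimension, a short analysis of the long exact sequences coming from $0 \to Z^{n-1} \to X^{n-1} \to Z^n \to 0$ yields acyclicity of $\Hom_A(G, X)$ for every Gorenstein-projective $G$.

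\emph{Main step: (1) $\implies$ (2).} Let $X$ be bounded below, $\mathcal{GP}$-acyclic, with Gorenstein-projective terms and cocycles $Z^n$. My plan has three stages. (a) Translate $\mathcal{GP}$-acyclicity into the concrete assertion that for every $n$ and every Gorenstein-projective $G$, the map $\Hom_A(G, X^{n-1}) \to \Hom_A(G, Z^n)$ induced by the projection $X^{n-1} \surj Z^n$ is surjective, by comparing kernels and images in the middle of $\Hom_A(G,X)$. (b) Use the bounded-below structure to see that each $Z^n$ has finite Gorenstein projective dimension, apply Holm's theorem \cite[Theorem 2.28]{Holm} to obtain a uniform bound $\opn{Gpd}_A(Z^n) \leq \opn{FPD}(A) =: d$, and then iterate the standard inequality $\opn{Gpd}_A(L) \leq \max(0, \opn{Gpd}_A(N) - 1)$ (valid for any short exact sequence $0 \to L \to M \to N \to 0$ with $M$ Gorenstein-projective) $d$ times across the cocycle sequences $0 \to Z^{n-1} \to X^{n-1} \to Z^n \to 0$, concluding that every $Z^n$ is in fact Gorenstein-projective. (c) With $Z^{n+1}$ now Gorenstein-projective, apply (a) at position $n+1$ with $G = Z^{n+1}$ to lift $\opn{id}_{Z^{n+1}}$ to a map $s \colon Z^{n+1} \to X^n$ which is a section of $X^n \surj Z^{n+1}$, splitting the short exact sequence $0 \to Z^n \to X^n \to Z^{n+1} \to 0$; hence $X$ is contractible. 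I expect stage (b) to be the main obstacle: collapsing the Gorenstein projective dimensions of all cocycles to zero from finiteness of $\opn{FPD}(A)$ alone crucially combines Holm's theorem with the descent inequality for $\opn{Gpd}_A$ across the cocycle sequences, and requires a careful index bookkeeping to ensure the inductive descent lands at Gorenstein-projectivity uniformly.

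\emph{Implication (3) $\implies$ (1).} Under the additional noetherian and dualizing-complex hypotheses, I would invoke the machinery of \cite{ShFinDim, Rickard}. The plan is to interpret condition (3) as a vanishing/compactness statement in the homotopy category $\msf{K}(\opn{Proj} A)$ of projectives, and then use the dualizing complex to translate this into a bound on $\projdim_A(M)$ for modules $M$ of finite projective dimension. Since this direction essentially appears in Shaul's earlier work on finitistic dimensions, I would follow those arguments rather than reprove them; its difficulty lies in the derived-category manipulations enabled by the dualizing complex rather than in Gorenstein aspects, which are confined to the first two implications.
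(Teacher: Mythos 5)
Your proposal is correct and follows essentially the same route as the paper: the implication (1) $\implies$ (2) is obtained by bounding the Gorenstein-projective dimension of the cocycles via Holm's equality $\opn{FPD}(A) = \opn{FGPD}(A)$ and then descending to Gorenstein-projectivity via \cite[Theorem 2.20]{Holm} (your iterated inequality is just an unrolling of that theorem), after which the splitting criterion in your steps (a)+(c) is exactly the paper's \cref{prop:contr} and \cref{cor:GP-cont}; the implication (2) $\implies$ (3) is again the paper's argument (bounded below acyclic complexes of projectives are $\mathcal{GP}$-acyclic by Ext-orthogonality, as in \cref{prop:flat-is-gp} and \cref{cor:proj-is-gp-acyclic}); and (3) $\implies$ (1) is, as you say, cited from earlier work.
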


The fact that (1) $\implies$ (3), and the fact that (3) $\implies$ (1) when the ring is noetherian and admits a dualizing complexes are not new, and were proved in \cite{ShGor}.
The major new contribution of the result above is the introduction of condition (2),
and the fact that for two-sided noetherian rings with a dualizing complex (so in particular, for all Artin algebras), this condition is equivalent to $\opn{FPD}(A) < \infty$.

The condition (3) implies that the finitistic dimension conjecture is reflected in the category $\cat{K}_{\opn{ac}}(\opn{Proj}(A))$,
the homotopy category of acyclic complexes of projective modules,
which is a model for the singularity category of $A^{\op}$.
Condition (2) of the above result suggests that the data whether $\opn{FPD}(A) < \infty$ is also encoded in the category $\cat{K}_{\mrm{gpac}}(\mathcal{GP}(A))$,
the homotopy category of $\mathcal{GP}$-acyclic complexes of Gorenstein-projective $A$-modules,
which is a kind of a Gorenstein singularity category.

Our second main result is a Gorenstein version of a theorem of Rickard from \cite{Rickard}.
In that paper, answering a question of Keller, Rickard showed that for a finite dimensional algebra $A$ over a field, if the localizing subcategory generated by the injective $A$-modules is equal to $\cat{D}(A)$, then $\opn{FPD}(A) < \infty$. 
Our generation result concerns generation by the Gorenstein-injective modules,
but  we must replace the ordinary derived category by the Gorenstein-injective derived category $\cat{D}_{\mathcal{GI}}(A)$,
which was introduced in \cite{GaoZhang}, and in recalled in \cref{sec:prem} below.
Using this notion, in \cref{cor:gener} we show:

\begin{cthm}
Let $A$ be an Artin algebra,
and suppose that the localizing subcategory of the 
Gorenstein-injective derived category $\cat{D}_{\mathcal{GI}}(A)$
generated by the Gorenstein-injective $A$-modules is equal to $\cat{D}_{\mathcal{GI}}(A)$.
Then $\opn{FPD}(A) < \infty$.
\end{cthm}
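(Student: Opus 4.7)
The strategy is to reduce to Theorem~I (the equivalence (1)$\iff$(2) for Artin algebras), which characterises $\opn{FPD}(A)<\infty$ as the contractibility of every bounded below $\mathcal{GP}$-acyclic complex of Gorenstein-projective modules. To bridge from the hypothesis---stated in terms of Gorenstein-injectives---to this condition on Gorenstein-projectives, I would use the standard Matlis duality $D=\Hom_R(-,E)$ for an Artin $R$-algebra $A$ (with $E=E_R(R/\opn{rad}(R))$): this is exact and contravariant, restricts to a perfect duality on finitely generated modules, and exchanges (finitely generated) Gorenstein-injective $A$-modules with (finitely generated) Gorenstein-projective $A^{\op}$-modules.

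Extending $D$ to cochain complexes and verifying that it sends $\mathcal{GI}$-acyclic complexes to $\mathcal{GP}$-acyclic complexes, I would obtain a contravariant functor $\cat{D}_{\mathcal{GI}}(A)\to\cat{D}_{\mathcal{GP}}(A^{\op})$, where the target is the Gorenstein-projective derived category of Gao--Zhang. Because $D$ turns coproducts into products, the localising-generation hypothesis on $\mathcal{GI}(A)$ becomes a colocalising-cogeneration statement over $A^{\op}$: the right Hom-orthogonal of $\mathcal{GP}(A^{\op})$ in $\cat{D}_{\mathcal{GP}}(A^{\op})$ is zero. A bounded below $\mathcal{GP}$-acyclic complex $X$ of Gorenstein-projective $A^{\op}$-modules lies tautologically in this orthogonal, so $X\cong 0$ in $\cat{D}_{\mathcal{GP}}(A^{\op})$; being a bounded below complex of Gorenstein-projectives, $X$ is contractible in $\cat{K}(\opn{Mod}(A^{\op}))$. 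Theorem~I applied to $A^{\op}$ then gives $\opn{FPD}(A^{\op})<\infty$. To recover the conclusion for $A$ itself, I would re-run the same chain of arguments exploiting the self-dual structure afforded by $D$ on an Artin $R$-algebra, or alternatively work directly inside $\cat{D}_{\mathcal{GI}}(A)$: the hypothesis forces $A$ into the thick closure of finitely many Gorenstein-injectives in $\cat{D}_{\mathcal{GI}}(A)$, producing a ``finite Gorenstein-injective cell decomposition'' of $A$ which, combined with the vanishing $\opn{Ext}^i_A(M,G)=0$ for $i>\projdim_A(M)$ and $G\in\mathcal{GI}(A)$, uniformly bounds $\projdim_A(M)$ by a spectral-sequence argument, delivering $\opn{FPD}(A)<\infty$ directly.

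The main obstacle I expect is the duality transport step: $D$ is a perfect duality only on finitely generated modules, and it exchanges coproducts with products, so translating the localising generation on the $A$-side into a usable contractibility input for Theorem~I requires careful Brown-representability and compact-generation work in the Gorenstein derived categories of Gao--Zhang, together with a controlled passage from the finitely generated setting to arbitrary Gorenstein modules. Reconciling the two possible conclusions---the one naturally landing on $A^{\op}$ via the duality transport and the one obtained by a direct cell-decomposition argument inside $\cat{D}_{\mathcal{GI}}(A)$---so that the final output is really $\opn{FPD}(A)<\infty$ and not merely $\opn{FPD}(A^{\op})<\infty$, is the central subtlety; once this is settled, the invocation of Theorem~I is purely formal.
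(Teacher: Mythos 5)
Your proposal takes a genuinely different route, and it has a gap that you yourself flag as ``the central subtlety'' but do not close. The paper avoids all of the difficulties you anticipate by using the \emph{covariant} functor $R\otimes_A -$ (where $R$ is the dualizing module of the Artin algebra), which by \cite[Proposition X.1.4]{BelReit} is an honest equivalence $\mathcal{GP}(A) \iso \mathcal{GI}(A)$ with quasi-inverse $\Hom_A(R,-)$, not a contravariant duality. This stays on the same side of the ring: one transports a bounded below $\mathcal{GP}$-acyclic complex $P$ of Gorenstein-projectives to $R\otimes_A P$, a bounded below complex of Gorenstein-injectives, and then the localizing-generation hypothesis in $\cat{D}_{\mathcal{GI}}(A)$ directly forces $R\otimes_A P \cong 0$ in $\cat{K}(A)$ (using the Gao--Zhang computation of Hom groups out of Gorenstein-injectives in $\cat{D}_{\mathcal{GI}}(A)$), whence $P\cong 0$ by applying $\Hom_A(R,-)$. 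Theorem~I then gives $\opn{FPD}(A)<\infty$.

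Your Matlis-duality approach, by contrast, lands on $A^{\op}$: the contravariant $D=\Hom_R(-,E)$ sends $\mathcal{GI}(A)$ to $\mathcal{GP}(A^{\op})$, and after transporting the generation hypothesis and invoking Theorem~I you would obtain $\opn{FPD}(A^{\op})<\infty$, not $\opn{FPD}(A)<\infty$. These are not known to be equivalent for a general Artin algebra, so ``re-running the same chain exploiting the self-dual structure'' does not obviously bring you back to $A$; nothing in the hypothesis is symmetric in $A$ and $A^{\op}$. The further difficulties you mention are also real: $D$ is a perfect duality only on finitely generated modules, while the Gorenstein modules entering the localizing subcategory and the complexes to which you must eventually apply Theorem~I are not assumed finitely generated, so the transport from a localizing subcategory of $\cat{D}_{\mathcal{GI}}(A)$ to a colocalizing subcategory of $\cat{D}_{\mathcal{GP}}(A^{\op})$ requires a bridge you do not supply. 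The alternative ``cell decomposition'' argument you sketch is not worked out and its key claim (that the hypothesis puts $A$ in the thick closure of finitely many Gorenstein-injectives in $\cat{D}_{\mathcal{GI}}(A)$) is not justified; a localizing subcategory is closed under arbitrary coproducts, and membership in it does not by itself give finite-step generation. So while the broad strategy of dualizing from the Gorenstein-injective to the Gorenstein-projective side and then invoking Theorem~I is the right idea, the specific dualizing functor you chose does not do the job, and you should instead use the covariant equivalence $R\otimes_A -$.
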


One other result of interest is obtained as a byproduct of our methods.
To explain it, recall that an acyclic complex $P$ of projective $A$-modules is contractible if and only if all its cocycles $\ker(d^n_P)$ are projective $A$-modules.
In \cite[Theorem 8.6]{NeemanHo}, Neeman showed that this is the case if and only if all its cocycles $\ker(d^n_P)$ are flat $A$-modules.
In \cref{cor:cont-criteria} we obtain the following new contractiblity criteria for acyclic complexes of projectives, provided that they are bounded below:
\begin{cthm}
Let $A$ be a ring, 
and let $P$ be an acyclic bounded below complex of projective $A$-modules. Then $P$ is contractible if and only if all its cocycles $\ker(d^n_P)$ are Gorenstein-projective $A$-modules.
\end{cthm}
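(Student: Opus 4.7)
The ``only if'' direction is immediate: in a contractible complex of projectives each cocycle $\ker(d^n_P)$ is a direct summand of $P^n$ and hence projective, in particular Gorenstein-projective.

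For the ``if'' direction, my plan is to show that the bounded-below hypothesis upgrades Gorenstein-projectivity of the cocycles to genuine projectivity, after which a classical argument finishes the job. Choose $N$ with $P^n = 0$ for $n < N$. Acyclicity at $P^N$ forces $\ker(d^N_P)=0$, so for each $n > N$ the truncation
\[ 0 \to P^N \to P^{N+1} \to \cdots \to P^{n-1} \to \ker(d^n_P) \to 0 \]
is a finite projective resolution; in particular every cocycle of $P$ has finite projective dimension.

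The second ingredient is the standard fact that a Gorenstein-projective module of finite projective dimension must be projective. This follows from the defining vanishing $\opn{Ext}^{\geq 1}_A(G,Q)=0$ for projective $Q$, which extends by induction along a finite projective resolution to $\opn{Ext}^{\geq 1}_A(G,M)=0$ for any $M$ of finite projective dimension; taking $M$ to be the first syzygy in a finite projective resolution of $G$ then causes the resolution to split, so $G$ is projective.

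Combining these observations, every cocycle $\ker(d^n_P)$ is projective. Each short exact sequence $0 \to \ker(d^n_P) \to P^n \to \ker(d^{n+1}_P) \to 0$ therefore splits, and the resulting identifications $P^n \cong \ker(d^n_P) \oplus \ker(d^{n+1}_P)$ assemble into an explicit contracting homotopy of $P$. I do not anticipate any serious obstacle, as both ingredients are essentially standard; the interesting content lies in the way boundedness below bridges the Gorenstein-projective characterization to the classical projective-cocycle criterion for contractibility.
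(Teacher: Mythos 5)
Your proof is correct, but it takes a different route from the paper's. The paper proceeds by first establishing the general statement (\cref{cor:proj-is-gp-acyclic}) that any bounded below acyclic complex whose terms have finite projective dimension is $\mathcal{GP}$-acyclic, and then invokes its relative contractibility criterion (\cref{cor:GP-cont}): a $\mathcal{GP}$-acyclic complex of Gorenstein-projectives is contractible if and only if its cocycles are Gorenstein-projective. You instead note that boundedness below forces the cocycles of $P$ to have finite projective dimension, invoke the standard fact that a Gorenstein-projective module of finite projective dimension is projective (which you reprove correctly via the Ext-vanishing $\opn{Ext}^{\geq 1}_A(G,M)=0$ for $M$ of finite projective dimension, applied to the first syzygy), and then fall back on the classical projective-cocycle criterion for contractibility. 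Both arguments ultimately rest on the same Ext-vanishing lemma \cite[Proposition 2.3]{Holm}, but deploy it differently: the paper uses it to upgrade the complex to $\mathcal{GP}$-acyclicity and works at the relative level, while you use it to upgrade the cocycles from Gorenstein-projective to honestly projective and work entirely in the classical setting. Your route is shorter and more elementary for this particular corollary; the paper's route fits into a broader framework (its intermediate results are reused elsewhere, e.g.\ in \cref{thm:main}) and its criterion \cref{cor:GP-cont} does not need boundedness below, whereas your argument uses that hypothesis essentially.
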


\section{Preliminaries}\label{sec:prem}

In this section we recall some basics about Gorenstein homological algebra and Gorenstein derived categories, and prove some new foundational results about the latter.
For more background about this subject the reader may consult \cite{Christensen,EnJen,GaoZhang,Holm2, Holm, Iacob} and their references.
Rings are assumed to be associative and unital,
modules are by default left modules,
and we grade complexes cohomologically.
A complex $M$ is called acyclic if it is exact,
that is, if $\mrm{H}^n(M) = 0$ for all $n \in \mathbb{Z}$.

\subsection{Gorenstein special modules}

Let $A$ be a ring.
Following \cite{EJDef},
an $A$-module $G$ is called a Gorenstein-projective $A$-module if there exist an acyclic complex of projective $A$-modules $P$,
such that $G \cong \ker(d^0_P)$,
and such that $\opn{Hom}_A(P,Q)$ is acyclic for any projective $A$-module $Q$.
An $A$-module $J$ is called a Gorenstein-injective $A$-module if there is an acyclic complex of injective $A$-modules $I$,
such that $J \cong \ker(d^0_I)$,
and such that $\opn{Hom}_A(E,I)$ is an acyclic complex for all injective $A$-modules $E$.
Finally, following \cite{EJFlat},
an $A$-module $H$ is called a Gorenstein-flat $A$-module if there is an acyclic complex of flat $A$-modules $F$ such that $H \cong \ker(d^0_F)$,
and such that $I\otimes_A F$ is an acyclic complex for all injective right $A$-modules $I$.

It is immediate from the definitions that any projective module is Gorenstein-projective, any injective module is Gorenstein-injective, and any flat module is Gorenstein-flat.
We will denote by $\mathcal{GP}(A)$ the category of Gorenstein-projective $A$-modules, and by $\mathcal{GI}(A)$ the category of Gorenstein-injective $A$-modules.

\subsection{Relative acyclic complexes}
Next, we recall some relative versions of acyclicity.
Let $\mathcal{P}$ be any non-empty class of $A$-modules which is closed under isomorphisms.
Recall that a complex $X$ of $A$-modules is called right $\mathcal{P}$-acyclic if for any $P \in \mathcal{P}$,
the complex $\opn{Hom}_A(P,X)$ is acyclic.
For example, if $\mathcal{P}$ is the class of all projective $A$-modules, then being right $\mathcal{P}$-acyclic is equivalent to being acyclic.
It follows that if $\mathcal{P}$ contains the class of all projective $A$-modules, 
then any right $\mathcal{P}$-acyclic complex is acyclic.
When $\mathcal{P} = \mathcal{GP}$,
the class of Gorenstein-projective modules,
such a complex is usually called a $\mathcal{GP}$-acyclic complex.
Given a ring $A$, 
the collection of all $\mathcal{GP}$-acyclic complexes over $A$ is a triangulated subcategory of the homotopy category $\cat{K}(A)$, 
which we will denote by $\cat{K}_{\mrm{gpac}}(A)$.
A map of complexes $X\to Y$ is called a $\mathcal{GP}$-quasi-isomorphism if its cone is $\mathcal{GP}$-acyclic.

Dually, for a non-empty class of objects $\mathcal{I}$ closed under isomorphisms, a complex $X$ will be called left $\mathcal{I}$-acyclic if for any $I \in \mathcal{I}$,
the complex $\opn{Hom}_A(X,I)$ is acyclic.
In the case where $\mathcal{I} = \mathcal{GI}$,
the class of Gorenstein-injective modules,
we will say that $X$ is a $\mathcal{GI}$-acyclic complex.
The collection of all $\mathcal{GI}$-acyclic complexes over a ring $A$ will be 
denoted by $\cat{K}_{\mrm{giac}}(A)$. 
Again, this is a triangulated subcategory.

\subsection{Relative derived categories}

Given a ring $A$, following \cite{GaoZhang},
we define the Gorenstein-projective derived category of $A$,
denoted by $\cat{D}_{\mathcal{GP}}(A)$,
to be the Verdier quotient 
\[
\cat{D}_{\mathcal{GP}}(A) := \cat{K}(A) / \cat{K}_{\mrm{gpac}}(A).
\]
This is a triangulated category, studied in detail in \cite{GaoZhang}.
Dually, the Gorenstein-injective derived category of $A$ will be denoted by  $\cat{D}_{\mathcal{GI}}(A)$.
It is given as the Verdier quotient
\[
\cat{D}_{\mathcal{GI}}(A) := \cat{K}(A) / \cat{K}_{\mrm{giac}}(A).
\]

Of special importance in this paper is the problem whether these categories admit products and coproducts. We shall now discuss this problem. 
We note that this is far from trivial, see \cite[Example 4.1]{NeeExa} for a similar situation in which products fail to exist.

\begin{prop}\label{prop:hasprod}
Let $A$ be a ring. Then the following hold:
\begin{enumerate}
\item A product of any collection of $\mathcal{GP}$-acyclic complexes over $A$ is $\mathcal{GP}$-acyclic.
\item A coproduct of any collection of $\mathcal{GI}$-acyclic complexes over $A$ is $\mathcal{GI}$-acyclic.
\end{enumerate}
\end{prop}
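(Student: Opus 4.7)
My plan is to reduce both claims to two elementary observations: that $\opn{Hom}_A(-,-)$ converts colimits in the first variable to limits and preserves limits in the second variable, and that a product of acyclic complexes of abelian groups is acyclic (since the category of abelian groups is AB4$^*$).

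For part (1), I would fix a Gorenstein-projective module $G$ and a family $\{X_i\}_{i \in I}$ of $\mathcal{GP}$-acyclic complexes. The standard natural isomorphism
\[
\opn{Hom}_A\Bigl(G,\, \prod_{i\in I} X_i\Bigr) \;\cong\; \prod_{i\in I} \opn{Hom}_A(G, X_i),
\]
applied degreewise and checked to be compatible with the differentials, identifies the left-hand side with a product of acyclic complexes of abelian groups, which is itself acyclic. Hence $\prod_{i\in I} X_i$ is $\mathcal{GP}$-acyclic.

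Part (2) is formally dual: given a Gorenstein-injective module $J$ and a family $\{X_i\}_{i\in I}$ of $\mathcal{GI}$-acyclic complexes, the canonical isomorphism
\[
\opn{Hom}_A\Bigl(\bigoplus_{i\in I} X_i,\, J\Bigr) \;\cong\; \prod_{i\in I} \opn{Hom}_A(X_i, J)
\]
again realises the Hom-complex as a product of acyclic complexes of abelian groups, and therefore as an acyclic complex.

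I do not expect a serious obstacle here; the proposition is essentially a formal consequence of the definitions together with exactness of products in the category of abelian groups. The real subtlety, which the paper's reference to \cite{NeeExa} presumably alludes to, concerns the \emph{opposite} closure properties: whether $\mathcal{GP}$-acyclicity is preserved under coproducts, or $\mathcal{GI}$-acyclicity under products. These fail in general, since $\opn{Hom}_A(G,-)$ does not commute with coproducts for an arbitrary Gorenstein-projective $G$. I expect the present proposition to be used in the sequel to produce, via the universal property of the Verdier quotient, products in $\cat{D}_{\mathcal{GP}}(A)$ and coproducts in $\cat{D}_{\mathcal{GI}}(A)$.
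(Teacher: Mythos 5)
Your argument is exactly the one the paper gives: apply $\opn{Hom}_A(G,-)$, use the canonical Hom-product isomorphism to reduce to the fact that products of acyclic complexes of abelian groups are acyclic, and dualize for part (2). The proposal is correct and matches the paper's proof; your closing remarks about why the opposite closure properties fail and how the proposition feeds into Bökstedt--Neeman are also on point.
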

\begin{proof}
Let $\{X_i\}_{i \in I}$ be a collection of $\mathcal{GP}$-acyclic complexes over $A$,
and let $P$ be a Gorenstein-projective $A$-module.
We have that
\[
\opn{Hom}_A(P,\prod_{i \in I} X_i) \cong \prod_{i \in I} \opn{Hom}_A(P,X_i).
\]
For each $i \in I$,
since $X_i$ is an $\mathcal{GP}$-acyclic complex,
by definition $\opn{Hom}_A(P,X_i)$ is an acyclic complex.
Since the product of acyclic complexes is acyclic,
the above isomorphism shows that $\opn{Hom}_A(P,\prod_{i \in I} X_i)$ is acyclic,
which implies that $\prod_{i \in I} X_i$ is $\mathcal{GP}$-acyclic.
The second statement is proved similarly.
\end{proof}

\begin{cor}\label{cor:has-prod}
Let $A$ be a ring. Then the following hold:
\begin{enumerate}
\item The Gorenstein-projective derived category $\cat{D}_{\mathcal{GP}}(A)$ has arbitrary products.
\item The Gorenstein-injective derived category $\cat{D}_{\mathcal{GI}}(A)$ has arbitrary coproducts.
\end{enumerate}
\end{cor}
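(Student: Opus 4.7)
The approach is a formal transfer from the homotopy category to the Verdier quotient using the two closure statements from \cref{prop:hasprod}. Concretely, I invoke the following standard fact from triangulated category theory (see e.g.\ Neeman's book on triangulated categories): if $\cat{T}$ is a triangulated category possessing arbitrary products and $\cat{S} \subseteq \cat{T}$ is a triangulated subcategory closed under products in $\cat{T}$, then the Verdier quotient $\cat{T}/\cat{S}$ has arbitrary products and the localization functor $Q : \cat{T} \to \cat{T}/\cat{S}$ preserves them; dually for coproducts.

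First I would note that the homotopy category $\cat{K}(A)$ admits all products and all coproducts, and that both are formed degreewise on the underlying complexes, with differentials given coordinatewise. Combined with \cref{prop:hasprod}(1), this says that $\cat{K}_{\mrm{gpac}}(A)$ is a triangulated subcategory of $\cat{K}(A)$ which is closed under arbitrary products inside $\cat{K}(A)$. Applying the general fact quoted above with $\cat{T} = \cat{K}(A)$ and $\cat{S} = \cat{K}_{\mrm{gpac}}(A)$, the Verdier quotient
\[
\cat{D}_{\mathcal{GP}}(A) = \cat{K}(A)/\cat{K}_{\mrm{gpac}}(A)
\]
therefore admits arbitrary products, and they are computed by applying the localization functor to the corresponding product in $\cat{K}(A)$. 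This gives (1). For (2), I would run the dual argument: $\cat{K}(A)$ has coproducts, \cref{prop:hasprod}(2) says $\cat{K}_{\mrm{giac}}(A)$ is closed under coproducts in $\cat{K}(A)$, and the dual of the quoted general fact yields that the Verdier quotient $\cat{D}_{\mathcal{GI}}(A) = \cat{K}(A)/\cat{K}_{\mrm{giac}}(A)$ has arbitrary coproducts, preserved by the localization.

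The only non-formal ingredient is \cref{prop:hasprod} itself, which has already been established. The main conceptual point (and the place where one might worry) is the general statement about Verdier quotients inheriting (co)limits from the ambient triangulated category; however this is a well-known result, and the verification amounts to checking, via the calculus of fractions presentation of morphisms in $\cat{T}/\cat{S}$, that the image under $Q$ of the ambient product satisfies the universal property. Since this is standard and does not rely on any feature specific to $A$-modules, the proof reduces to citing it. Thus I expect no genuine obstacle; the whole statement is a packaging of \cref{prop:hasprod} in the language of Verdier quotients.
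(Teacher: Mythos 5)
Your proof is correct and takes essentially the same route as the paper: the paper's argument is a one-line appeal to \cref{prop:hasprod} together with Lemma~1.5 of B\"okstedt--Neeman (and its dual), which is precisely the general fact you quote about Verdier quotients inheriting (co)products from the ambient category when the subcategory being killed is closed under them. The only cosmetic difference is that you cite the fact as folklore from Neeman's book and sketch a calculus-of-fractions verification, whereas the paper points to the specific lemma in \cite{BokNee}.
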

\begin{proof}
The first statement follows from \cref{prop:hasprod}(1) and the dual of \cite[Lemma 1.5]{BokNee}.
The second statement follows from \cref{prop:hasprod}(2) and \cite[Lemma 1.5]{BokNee}.
\end{proof}

We do not know if the dual statements hold. 
The next result given a necessary and sufficient condition for them to hold:

\begin{prop}
Let $A$ be a ring.
\begin{enumerate}
\item The Gorenstein-projective derived category $\cat{D}_{\mathcal{GP}}(A)$ has arbitrary coproducts which coincide with the coproducts in $\cat{K}(A)$ if and only if any coproduct of a collection of $\mathcal{GP}$-acyclic complexes over $A$ is $\mathcal{GP}$-acyclic.
\item The Gorenstein-injective derived category $\cat{D}_{\mathcal{GI}}(A)$ has arbitrary products which coincide with the products in $\cat{K}(A)$ if and only if any product of a collection of $\mathcal{GI}$-acyclic complexes over $A$ is $\mathcal{GI}$-acyclic.
\end{enumerate}
\end{prop}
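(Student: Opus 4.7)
My approach is to mirror, in both directions, the argument used for \cref{cor:has-prod}, with \cite[Lemma 1.5]{BokNee} and its dual as the main technical input. A preliminary observation that makes both directions transparent is that an object of $\cat{K}(A)$ becomes zero in $\cat{D}_{\mathcal{GP}}(A)$ (respectively in $\cat{D}_{\mathcal{GI}}(A)$) if and only if it is $\mathcal{GP}$-acyclic (respectively $\mathcal{GI}$-acyclic), since $\cat{K}_{\mrm{gpac}}(A)$ and $\cat{K}_{\mrm{giac}}(A)$ are thick triangulated subcategories of $\cat{K}(A)$.

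For the ``if'' direction of (1), I would note that the hypothesis says exactly that $\cat{K}_{\mrm{gpac}}(A)$ is closed under coproducts in $\cat{K}(A)$. Since $\cat{K}(A)$ itself has arbitrary coproducts, \cite[Lemma 1.5]{BokNee} then supplies that the Verdier quotient $\cat{D}_{\mathcal{GP}}(A)$ has arbitrary coproducts and that the localization functor $Q \colon \cat{K}(A) \to \cat{D}_{\mathcal{GP}}(A)$ preserves them, which is precisely what it means for the coproducts in $\cat{D}_{\mathcal{GP}}(A)$ to coincide with those in $\cat{K}(A)$.

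For the converse, I would use only that $Q$ preserves coproducts. Given any family $\{X_i\}_{i \in I}$ of $\mathcal{GP}$-acyclic complexes, each $Q(X_i)$ is zero in $\cat{D}_{\mathcal{GP}}(A)$, so $Q(\bigoplus_{i \in I} X_i) \cong \bigoplus_{i \in I} Q(X_i) = 0$, and the preliminary observation then forces $\bigoplus_{i \in I} X_i$ to be $\mathcal{GP}$-acyclic. Part (2) will follow by the same reasoning with products in place of coproducts, invoking the dual of \cite[Lemma 1.5]{BokNee}. There is no essential obstacle here: the whole statement is a formal unpacking of the universal property of the Verdier quotient together with the B\"okstedt--Neeman criterion.
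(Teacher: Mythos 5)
Your proposal is correct, and it largely mirrors the paper: both directions of part (1) rest on \cite[Lemma 1.5]{BokNee}, and part (2) is the product/coproduct dual. The one place you genuinely diverge is in the ``only if'' direction. The paper verifies that the coproduct $T = \bigoplus_{i} X_i$ is $\mathcal{GP}$-acyclic by hand: it takes a Gorenstein-projective module $P$, uses \cite[Proposition 2.8]{GaoZhang} to identify $\opn{Hom}_{\cat{D}_{\mathcal{GP}}(A)}(P, T[n])$ with $\opn{Hom}_{\cat{K}(A)}(P, T[n]) = \mrm{H}^n(\opn{Hom}_A(P,T))$, and concludes from $T \cong 0$ in the quotient that $\opn{Hom}_A(P,T)$ is acyclic. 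You instead invoke the general principle that an object of $\cat{K}(A)$ becomes zero in a Verdier quotient $\cat{K}(A)/\cat{S}$ precisely when it lies in the thick closure of $\cat{S}$, and observe that $\cat{K}_{\mrm{gpac}}(A)$ is already thick (a direct summand of a $\mathcal{GP}$-acyclic complex is $\mathcal{GP}$-acyclic, since cohomology is additive). Your route is more self-contained and somewhat more conceptual, avoiding the reference to \cite{GaoZhang}; the paper's route is more explicit and keeps the argument tied to the Hom-set computations already used elsewhere. Both are sound. One presentational point: it would be worth spelling out the one-line verification that $\cat{K}_{\mrm{gpac}}(A)$ and $\cat{K}_{\mrm{giac}}(A)$ are closed under direct summands, since the whole ``only if'' direction hinges on it and the paper never records this fact explicitly.
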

\begin{proof}
If a coproduct of $\mathcal{GP}$-acyclic complexes is $\mathcal{GP}$-acyclic,
then it follows from \cite[Lemma 1.5]{BokNee} that $\cat{D}_{\mathcal{GP}}(A)$ has arbitrary coproducts.
Conversely,
if $\cat{D}_{\mathcal{GP}}(A)$ has arbitrary coproducts,
and $\{X_i\}_{i \in I}$ is a collection of $\mathcal{GP}$-acyclic complexes over $A$, 
then in $\cat{D}_{\mathcal{GP}}(A)$ it holds that each $X_i \cong 0$.
Set $T:= \oplus_{i \in I} X_i$.
Then $T \cong \oplus_{i \in I} 0 \cong 0$ in $\cat{D}_{\mathcal{GP}}(A)$.
Let $P$ be a Gorenstein-projective $A$-module.
Then for all $n$ we have that:
\[
0 = \opn{Hom}_{\cat{D}_{\mathcal{GP}}(A)}(P,0[n]) = \opn{Hom}_{\cat{D}_{\mathcal{GP}}(A)}(P,T[n]).
\]
According to \cite[Proposition 2.8]{GaoZhang},
we have that
\[
0 = \opn{Hom}_{\cat{D}_{\mathcal{GP}}(A)}(P,T[n]) \cong \opn{Hom}_{\cat{K}(A)}(P,T[n]),
\]
and as the latter by definition is equal to $\mrm{H}^n(\opn{Hom}_A(P,T))$,
we deduce that $\opn{Hom}_A(P,T)$ is acyclic,
so that $T$ is $\mathcal{GP}$-acyclic as claimed.
The second statement is proven similarly.
\end{proof}

\subsection{Dualizing complexes}

The class of rings for which the main results of this paper hold are the two-sided noetherian rings which admit a dualizing complex.
These were first introduced in noncommutative algebra by Yekutieli in \cite{Yek}. In this paper, following \cite{InKr},
we need a slight variant of Yekutieli's definition, and define a dualizing complex $R$ over a ring $A$ to be a bounded complex of $A$-$A$-bimodules,
which are injective both as $A$-modules and as $A^{\op}$-modules,
have finitely generated cohomology both on the left and on the right, and such that the natural maps $A\to \opn{Hom}_A(R,R)$ and $A\to \opn{Hom}_{A^{\op}}(R,R)$ are quasi-isomorphisms.
In particular, any Artin algebra $A$ admits a dualizing complex.
In fact, in that case it is concentrated in a single degree,
so we refer to it as the dualizing module of $A$.
A foundational result in Gorenstein homological algebra \cite{JorRes}, due to J{\o}rgensen, states that over a two-sided noetherian ring $A$ with a dualizing complex, for any $A$-module $M$,
there exist a $\mathcal{GP}$-quasi-isomorphism $P \to M$,
where $P$ is a bounded above complex of Gorenstein-projective $A$-modules. Such a $P$ is called a proper Gorenstein-projective resolution of $M$.

\section{Contractible complexes of Gorenstein-projectives}

In this section we show that certain relative acyclic complexes must be contractible over rings of finite finitistic dimension,
establishing a relative version of the results of \cite[Appendix A]{ShGor}.

It is well known that an acyclic complex of projective modules is contractible if and only if all its cocycles are projective.
The next result is a relative version of this fact.

\begin{prop}\label{prop:contr}
Let $A$ be a ring, and let $\mathcal{P}$ be any non-empty class of $A$-modules which is closed under isomorphisms and direct summands,
and contains all projective $A$-modules.
Let $X$ be a right $\mathcal{P}$-acyclic complex,
such that $X^n \in \mathcal{P}$ for all $n \in \mathbb{Z}$.
Then $X$ is contractible if and only if all its cocycles $\ker(d^n_X)$ belong to $\mathcal{P}$.
\end{prop}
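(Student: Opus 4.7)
The plan is to adapt the classical proof that an acyclic complex of projectives is contractible iff its cocycles are projective. Right $\mathcal{P}$-acyclicity of $X$ is exactly what is needed so that tests against objects of $\mathcal{P}$ behave like tests against projectives in the classical argument, while closure of $\mathcal{P}$ under direct summands lets us recognize cocycle summands as members of $\mathcal{P}$.

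For the forward direction, I would assume $X$ is contractible. Since $\mathcal{P}$ contains all projectives, $X$ is in particular acyclic, and a contracting homotopy $h$ yields the standard splittings of each short exact sequence $0 \to Z^n \to X^n \to Z^{n+1} \to 0$ (where $Z^n := \ker d^n_X$): concretely, $h|_{Z^{n+1}}$ is a section of the surjection $X^n \to Z^{n+1}$, and $\mrm{id}_{X^n} - h \circ d^n$ is a retraction of the inclusion $Z^n \hookrightarrow X^n$. Hence each $Z^n$ is a direct summand of $X^n \in \mathcal{P}$, and closure under summands gives $Z^n \in \mathcal{P}$.

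For the converse, assume each $Z^n \in \mathcal{P}$. Acyclicity of $X$ (from right $\mathcal{P}$-acyclicity, since projectives are in $\mathcal{P}$) yields short exact sequences $0 \to Z^n \xrightarrow{\iota^n} X^n \xrightarrow{\bar{d}^n} Z^{n+1} \to 0$. The key step, and the only real obstacle, is showing each of these splits. I would use that the complex $\Hom_A(Z^{n+1}, X)$ is acyclic (by hypothesis, since $Z^{n+1} \in \mathcal{P}$): the inclusion $\iota^{n+1}: Z^{n+1} \hookrightarrow X^{n+1}$ is a degree-$(n+1)$ cocycle there because $d^{n+1} \circ \iota^{n+1} = 0$, and so it is a coboundary. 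Thus there exists $\sigma^n: Z^{n+1} \to X^n$ with $d^n \circ \sigma^n = \iota^{n+1}$, which is precisely a splitting of $\bar{d}^n$. Setting $W^n := \sigma^n(Z^{n+1})$, one obtains $X^n = Z^n \oplus W^n$ with $d^n$ vanishing on $Z^n$ and restricting to an isomorphism $W^n \iso Z^{n+1} \subseteq X^{n+1}$. Consequently $X$ decomposes as the direct sum, over $n$, of the two-term subcomplexes $W^n \to Z^{n+1}$ concentrated in degrees $n$ and $n+1$, each of which is trivially contractible; hence $X$ itself is contractible.
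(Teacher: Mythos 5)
Your proof is correct and takes essentially the same route as the paper. Both establish the nontrivial direction by observing that right $\mathcal{P}$-acyclicity applied to the test object $Z^{n+1}=\ker(d^{n+1}_X)\in\mathcal{P}$ makes $\Hom_A(Z^{n+1},X)$ acyclic, and then lifting the identity (equivalently, the inclusion $\iota^{n+1}$, viewed as a cocycle) through $d^n$ to obtain a splitting of $0\to Z^n\to X^n\to Z^{n+1}\to 0$; your phrasing via ``cocycle implies coboundary'' is a clean repackaging of the paper's explicit chase through the induced $\Hom$-sequence, and your closing remark spelling out the direct-sum decomposition into two-term contractible pieces simply makes explicit a step the paper leaves implicit.
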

\begin{proof}
Suppose first that $X$ is contractible.
This implies that for any $n$,
the short exact sequence
\[
0 \to \ker(d^n_X) \to X^n \to \ker(d^{n+1}_X) \to 0
\]
is split, so in particular $\ker(d^n_X)$ is a direct summand of $X^n$, which implies that $\ker(d^n_X) \in \mathcal{P}$.
Conversely, suppose that for all $n$, 
we have that $\ker(d^n_X) \in \mathcal{P}$.
Since by assumption $X$ is acyclic,
to show it is contractible,
it is enough to show that the sequence
the short exact sequence
\begin{equation}\label{eqn:to-split}
0 \to \ker(d^n_X) \to X^n \to \ker(d^{n+1}_X) \to 0
\end{equation}
splits.
To see this, we apply the functor $\opn{Hom}_A(\ker(d^{n+1}_X),-)$ to this sequence,
and we claim that the resulting sequence
\begin{equation}\label{eqn:Hom-exact}
0 \to \opn{Hom}_A(\ker(d^{n+1}_X),\ker(d^n_X)) \to \opn{Hom}_A(\ker(d^{n+1}_X),X^n) \to \opn{Hom}_A(\ker(d^{n+1}_X),\ker(d^{n+1}_X)) \to 0
\end{equation}
is still exact.
To see this, by left exactness of the covariant hom functor,
it is enough for us to show that the map
\[
\opn{Hom}_A(\ker(d^{n+1}_X),X^n) \to \opn{Hom}_A(\ker(d^{n+1}_X),\ker(d^{n+1}_X))
\]
is surjective.
Since by assumption $X$ is right $\mathcal{P}$-acyclic,
and $\ker(d^{n+1}_X) \in \mathcal{P}$,
it follows that the cochain complex $\opn{Hom}_A(\ker(d^{n+1}_X),X)$ is acyclic.
In particular, 
the sequence
\begin{equation}\label{eqn:exact-X}
\opn{Hom}_A(\ker(d^{n+1}_X),X^n) \to \opn{Hom}_A(\ker(d^{n+1}_X),X^{n+1}) \to 
\opn{Hom}_A(\ker(d^{n+1}_X),X^{n+2})
\end{equation}
is exact.
Given a map $\alpha \in \opn{Hom}_A(\ker(d^{n+1}_X),\ker(d^{n+1}_X))$,
since $\ker(d^{n+1}_X) = \opn{Im}(d^n_X) \subseteq X^{n+1}$,
we may consider $\alpha$ as an element of $\opn{Hom}_A(\ker(d^{n+1}_X),X^{n+1})$.
The map 
\[
\opn{Hom}_A(\ker(d^{n+1}_X),X^{n+1}) \xrightarrow{(d^{n+1}_X)_*}
\opn{Hom}_A(\ker(d^{n+1}_X),X^{n+2})
\]
is given by composition with $d^{n+1}_X$,
which shows that $(d^{n+1}_X)_*(\alpha) = d^{n+1}_X\circ \alpha = 0$.
Thus, exactness of \cref{eqn:exact-X}
shows that $\alpha$ is in the image of $(d^n_X)_*$,
and hence \cref{eqn:Hom-exact} is exact.
In particular, there exist $\beta:\ker(d^{n+1}_X) \to X^n$,
such that $d^n_X \circ \beta = 1_{\ker(d^{n+1}_X)}$,
which shows that \cref{eqn:to-split} splits.
\end{proof}

Applying \cref{prop:contr} to the class of Gorenstein-projective modules, we obtain:

\begin{cor}\label{cor:GP-cont}
Let $A$ be a ring, and let $X$ be a $\mathcal{GP}$-acyclic complex of Gorenstein-projective modules.
Then $X$ is contractible if and only if all its cocycles $\ker(d^n_X)$ are Gorenstein-projective. 
\end{cor}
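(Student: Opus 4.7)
The plan is to apply \cref{prop:contr} directly to the class $\mathcal{P} := \mathcal{GP}(A)$ of Gorenstein-projective $A$-modules. To do this, I need to verify the four hypotheses on $\mathcal{P}$ in \cref{prop:contr}: non-emptiness, closure under isomorphisms, closure under direct summands, and containment of all projective $A$-modules.

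The first, second, and fourth conditions are immediate from the definition of a Gorenstein-projective module: the zero module and every projective module is trivially Gorenstein-projective (using an appropriate acyclic complex of projectives with zero differentials, or simply the projective module itself in a trivial complex), and the defining property is clearly isomorphism-invariant. The only condition requiring a substantive verification is closure under direct summands, namely that if $G \cong G_1 \oplus G_2$ is Gorenstein-projective then so are $G_1$ and $G_2$. This is a standard result in Gorenstein homological algebra, proved for instance in Holm's paper \cite{Holm}, which I would cite rather than reprove.

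Once these four properties are in place, the hypothesis of \cref{prop:contr} is satisfied, and since the corollary's hypotheses match verbatim (a $\mathcal{GP}$-acyclic complex $X$ with $X^n \in \mathcal{GP}(A)$ for all $n$), the conclusion of \cref{prop:contr} yields exactly the claimed equivalence: $X$ is contractible if and only if every cocycle $\ker(d^n_X)$ lies in $\mathcal{GP}(A)$.

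The main (and essentially only) obstacle in this proof is the closure of $\mathcal{GP}(A)$ under direct summands. Since this is a well-documented fact in the literature, the proof reduces to a one-line citation followed by an invocation of \cref{prop:contr}; there are no genuinely new ideas required beyond the general relative proposition already established.
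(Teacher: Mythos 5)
Your proof is correct and matches the paper's approach exactly: the paper derives \cref{cor:GP-cont} by applying \cref{prop:contr} to $\mathcal{P} = \mathcal{GP}(A)$, relying implicitly on the standard fact (due to Holm) that the class of Gorenstein-projective modules is closed under direct summands. Your version simply makes explicit the verification of the hypotheses that the paper leaves to the reader.
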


Here is the main result of this section.

\begin{thm}\label{thm:FPDimplyCont}
Let $A$ be a ring such that $\opn{FPD}(A) < \infty$.
Then any bounded below $\mathcal{GP}$-acyclic complex of Gorenstein-projective $A$-modules is contractible.
\end{thm}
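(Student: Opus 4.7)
The plan is to prove the theorem by establishing that every cocycle $K^n := \ker(d^n_X)$ of $X$ is Gorenstein-projective, after which \cref{cor:GP-cont} will immediately give that $X$ is contractible.

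First, I would observe that since $A$ is itself a projective (hence Gorenstein-projective) $A$-module, the $\mathcal{GP}$-acyclicity hypothesis implies $X \cong \Hom_A(A,X)$ is acyclic. Choosing $N \in \mathbb{Z}$ with $X^n = 0$ for $n < N$, the cocycles satisfy $K^n = 0$ for $n \leq N$, and splicing the standard short exact sequences $0 \to K^n \to X^n \to K^{n+1} \to 0$ produces, for each $k \geq 1$, an exact sequence
\[
0 \to X^N \to X^{N+1} \to \cdots \to X^{N+k-1} \to K^{N+k} \to 0
\]
whose entries, except for $K^{N+k}$, are all in $\mathcal{GP}$. This already gives the finite bound $G\text{-}\projdim K^{N+k} \leq k-1$.

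Next, the key input is Holm's theorem \cite[Theorem 2.28]{Holm}: the hypothesis $\opn{FPD}(A) < \infty$ is equivalent to the finiteness of the Gorenstein finitistic projective dimension $\sup\{G\text{-}\projdim M : G\text{-}\projdim M < \infty\}$, which then equals $d := \opn{FPD}(A)$. Consequently $G\text{-}\projdim K^{N+k} \leq d$ for every $k \geq 1$. For each $k \geq d+1$ I would truncate the above resolution to
\[
0 \to K^{N+k-d} \to X^{N+k-d} \to \cdots \to X^{N+k-1} \to K^{N+k} \to 0,
\]
a length-$d$ Gorenstein-projective resolution of $K^{N+k}$. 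Invoking the standard Schanuel-type theorem for $\mathcal{GP}$—namely, that the $d$-th kernel of any Gorenstein-projective resolution of a module of Gorenstein-projective dimension at most $d$ again lies in $\mathcal{GP}$—forces $K^{N+k-d} \in \mathcal{GP}$. Letting $k$ range over $k \geq d+1$ shows that $K^m$ is Gorenstein-projective for all $m \geq N+1$, while $K^m$ vanishes (hence trivially lies in $\mathcal{GP}$) for $m \leq N$. All cocycles therefore lie in $\mathcal{GP}$, and \cref{cor:GP-cont} finishes the proof.

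The principal technical point is pinpointing the correct Schanuel-type statement in the Gorenstein setting, but this is well documented in the literature (it follows from $\mathcal{GP}$ being a resolving subcategory together with the characterization of Gorenstein-projective dimension). Otherwise the argument is a direct Gorenstein analogue of the classical proof that $\opn{FPD}(A) < \infty$ forces bounded below acyclic complexes of projectives to be contractible, with Gorenstein-projective dimension and Holm's identification playing the roles of ordinary projective dimension and the definition of $\opn{FPD}$.
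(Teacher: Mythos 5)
Your proposal is correct and follows essentially the same route as the paper: reduce to showing all cocycles are Gorenstein-projective via \cref{cor:GP-cont}, use bounded-belowness to produce finite-length Gorenstein-projective resolutions of the cocycles, invoke Holm's identification $\opn{FPD}(A) = \opn{FGPD}(A)$ to cap the Gorenstein-projective dimension at $d$, and then apply the syzygy criterion (Holm's Theorem 2.20, which is precisely the ``Schanuel-type'' statement you describe) to conclude the $d$-th syzygy is Gorenstein-projective. The paper's proof is an abbreviated version of exactly this argument.
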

\begin{proof}
Suppose $\opn{FPD}(A) = s < \infty$.
According to \cite[Theorem 2.28]{Holm},
there is an equality
\[
\opn{FPD}(A) = \opn{FGPD}(A),
\]
where $\opn{FGPD}(A)$ is the finitistic Gorenstein projective dimension of $A$.
Let $X$ be a bounded below $\mathcal{GP}$-acyclic complex of Gorenstein-projective modules.
Since any projective module is Gorenstein-projective,
in particular $X$ is acyclic.
The fact that $X$ is bounded below implies that
\[
0 \to \dots \to X^{n+s-1} \to X^{n+s} \xrightarrow{d^{n+s}_X} \opn{Im}(d^{n+s}_X)
\]
is a Gorenstein-projective resolution of the $A$-module $\opn{Im}(d^{n+s}_X)$, 
which shows that $\opn{Im}(d^{n+s}_X)$ has finite Gorenstein-projective dimension. 
Hence, it has Gorenstein-projective dimension at most $s$,
so by \cite[Theorem 2.20]{Holm},
its $s$-syzygy, which is simply $\ker(d^n_X)$ must be Gorenstein-projective.
\cref{cor:GP-cont} then implies that $X$ is contractible.
\end{proof}

\begin{rem}
One may easily obtain results dual to the results of this section,
and show,
for instance,
that if a ring has finite finitistic injective dimension,
then any bounded above $\mathcal{GI}$-acyclic complex of Gorenstein-injective $A$-modules is contractible.
\end{rem}

\section{Bounded below acyclic complexes of Gorenstein-projectives}

The aim of this section is to obtain a converse of \cref{thm:FPDimplyCont},
showing that contractibility of certain complexes of Gorenstein-projectives implies finite finitistic dimension.

The next result is similar to \cite[Remark 3.3]{Holm2}.
Because of its centrality to the argument below, 
we include full details.

\begin{prop}\label{prop:flat-is-gp}
Let $A$ be a ring,
and let $X$ be a bounded acyclic complex of $A$-modules.
Suppose that for all $n \in \mathbb{Z}$,
it holds that the $A$-module $X^n$ has finite projective dimension over $A$. Then $X$ is $\mathcal{GP}$-acyclic.
\end{prop}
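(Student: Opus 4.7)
The task is to prove that $\Hom_A(G,X)$ is acyclic for every Gorenstein-projective $A$-module $G$. My plan is to reduce this to a cohomological vanishing statement about Ext groups against modules of finite projective dimension, using the fact that $X$ is bounded and acyclic to break it apart into short exact sequences.

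First I would consider the ``brutal'' factorization of the acyclic complex into short exact sequences
\[
0 \to C^n \to X^n \to C^{n+1} \to 0,
\]
where $C^n = \ker(d^n_X) = \opn{Im}(d^{n-1}_X)$. Because $X$ is bounded, there is a smallest index $a$ with $X^a \neq 0$, and then $C^a = 0$. Proceeding by induction on $n \geq a$, since $X^n$ has finite projective dimension by hypothesis and $C^n$ does by the inductive hypothesis, the standard ``two-out-of-three'' fact for finite projective dimension applied to the displayed short exact sequence yields that $C^{n+1}$ also has finite projective dimension. Thus every $C^n$ has finite projective dimension.

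The next step is to invoke the classical fact that Gorenstein-projective modules are Ext-orthogonal to modules of finite projective dimension: for any Gorenstein-projective $G$ and any $A$-module $L$ with $\projdim_A(L) < \infty$, one has $\opn{Ext}^i_A(G,L) = 0$ for all $i \geq 1$. (This is immediate by induction on $\projdim_A(L)$ from the defining property that $\opn{Hom}_A(P,Q)$ is acyclic whenever $P$ is a complete projective resolution and $Q$ is projective.) Applying $\Hom_A(G,-)$ to each short exact sequence above and using the long exact Ext sequence, the vanishing of $\opn{Ext}^1_A(G,C^n)$ then yields short exact sequences
\[
0 \to \opn{Hom}_A(G,C^n) \to \opn{Hom}_A(G,X^n) \to \opn{Hom}_A(G,C^{n+1}) \to 0.
\]

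Finally, I would splice these short exact sequences back together. Since $\Hom_A(G,-)$ is left exact and commutes with kernels, the kernel of the induced differential $\opn{Hom}_A(G,X^n) \to \opn{Hom}_A(G,X^{n+1})$ is precisely $\opn{Hom}_A(G,C^n)$, while the image of $\opn{Hom}_A(G,X^{n-1}) \to \opn{Hom}_A(G,X^n)$ equals the image of $\opn{Hom}_A(G,C^n) \hookrightarrow \opn{Hom}_A(G,X^n)$ by surjectivity in the previous short exact sequence. The boundary cases $n=a$ and $n=b$ (where $X$ vanishes on one side) are handled by noting that $C^a = 0 = C^{b+1}$. Hence $\opn{Hom}_A(G,X)$ is acyclic, so $X$ is $\mathcal{GP}$-acyclic. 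I do not anticipate a genuine obstacle here; the main bookkeeping point is verifying that the propagation of finite projective dimension along $X$ really does hold at every term, which is ensured by boundedness.
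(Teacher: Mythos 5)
Your proof is correct and follows essentially the same route as the paper's: decompose $X$ into the short exact sequences $0 \to C^n \to X^n \to C^{n+1} \to 0$ on cocycles, use boundedness to propagate finite projective dimension to every $C^n$, invoke the Ext-orthogonality $\opn{Ext}^{\geq 1}_A(G,L)=0$ for $L$ of finite projective dimension (the paper cites \cite[Proposition~2.3]{Holm} rather than re-proving it), and splice the resulting short exact sequences of Hom groups back together. The paper phrases the induction as running from the right end of the complex and interleaves establishing finiteness of projective dimension with establishing exactness, while you first establish all $\projdim_A(C^n)<\infty$ and then splice, but this is only a cosmetic reorganization of the same argument.
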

\begin{proof}
Let $G$ be some Gorenstein-projective $A$-module,
and assume $X$ is of the form
\begin{equation}\label{eqn:proj-res}
0 \to P^{-n} \to \dots P^{-1} \to P^0 \to 0.
\end{equation}
Applying the functor $\opn{Hom}_A(G,-)$ to the short exact sequence
\[
0 \to \ker(d^{-1}) \to P^{-1} \xrightarrow{d^{-1}} P^0 \to 0
\]
yields
\[
0 \to \opn{Hom}_A(G,\ker(d^{-1})) \to \opn{Hom}_A(G,P^{-1}) \to \opn{Hom}_A(G,P^0) \to \opn{Ext}^1_A(G,\ker(d^{-1})).
\]
The resolution (\ref{eqn:proj-res}) shows that $\ker(d^{-1})$ is isomorphic to a bounded complex of modules of finite projective dimension, and hence that $\projdim_A(\ker(d^{-1})) < \infty$,
so by \cite[Proposition 2.3]{Holm} we have that $\opn{Ext}^1_A(G,\ker(d^{-1})) = 0$,
showing that the map $ \opn{Hom}_A(G,P^{-1}) \to \opn{Hom}_A(G,P^0)$ is surjective.
Suppose we know that the sequence
\[
\opn{Hom}_A(G,P^{-i}) \to \opn{Hom}_A(G,P^{-(i-1)}) \to \dots \to \opn{Hom}_A(G,P^0) \to 0
\]
is exact,
and let $K = \opn{Im}(d^{-(i+1)}:P^{-(i+1)} \to P^{-i})$.
There is a short exact sequence
\[
0 \to N \to P^{-(i+1)} \to K \to 0, 
\]
and as $N$ is a syzygy of (\ref{eqn:proj-res}),
by the same argument as above, 
it holds that $\projdim_A(N) < \infty$.
Thus, using \cite[Proposition 2.3]{Holm} again,
the sequence
\[
0 \to \opn{Hom}_A(G,N) \to \opn{Hom}_A(G,P^{-(i+1)}) \to \opn{Hom}_A(G,K) \to 0 
\]
is exact, and as the map $d^{-(i+1)}:P^{-(i+1)} \to P^{-i}$ factors as $P^{-(i+1)} \surj K \inj P^{-i}$, it follows that $\opn{Hom}_A(G,P^{-(i+1)}) \to \opn{Hom}_A(G,P^{-i}) \to \opn{Hom}_A(G,P^{-(i-1)})$ is exact.
Induction on the amplitude of $X$ now shows that $X$ is $\mathcal{GP}$-acyclic.
\end{proof}

Next, we extend the above proposition to bounded below complexes.

\begin{thm}
Let $A$ be a ring,
and let $X$ be a bounded below acyclic complex,
with the property that $\projdim_A(X^n) < \infty$ for all $n \in \mathbb{Z}$.
Then $X$ is $\mathcal{GP}$-acyclic.    
\end{thm}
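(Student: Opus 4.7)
My plan is to reduce to the bounded case already handled by \cref{prop:flat-is-gp} by appropriate truncations. Fix a Gorenstein-projective $A$-module $G$; we must show $\opn{Hom}_A(G,X)$ is acyclic. Choose $N$ with $X^n = 0$ for $n < N$, and write $Z^n := \ker(d^n_X) = \opn{Im}(d^{n-1}_X)$ for the cocycles (using acyclicity of $X$).

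The key lemma I would first establish is that every cocycle $Z^n$ has finite projective dimension. Since $X^{N-1} = 0$, we have $Z^N = 0$, and $Z^{N+1} \cong X^N$ has finite projective dimension by hypothesis. Proceeding by induction on $n$, the short exact sequence
\[
0 \to Z^n \to X^n \to Z^{n+1} \to 0
\]
shows that if $\projdim_A(Z^n) < \infty$ then, combined with $\projdim_A(X^n) < \infty$, we get $\projdim_A(Z^{n+1}) < \infty$. This is where the bounded below hypothesis is essential: in its absence there is no base case from which to start the induction.

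Now fix an integer $k$. I would form the bounded acyclic complex
\[
Y_k : \quad 0 \to X^N \to X^{N+1} \to \dots \to X^{k+1} \to Z^{k+2} \to 0,
\]
whose terms are $X^N, \dots, X^{k+1}$ and $Z^{k+2}$. By the first step and the hypothesis, every term of $Y_k$ has finite projective dimension. Applying \cref{prop:flat-is-gp} yields that $Y_k$ is $\mathcal{GP}$-acyclic, so $\opn{Hom}_A(G,Y_k)$ is acyclic.

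It remains to match this against $\opn{Hom}_A(G,X)$ in low degrees. Since the differential $d^{k+1}_X : X^{k+1} \to X^{k+2}$ factors through the inclusion $Z^{k+2} \hookrightarrow X^{k+2}$, and $\opn{Hom}_A(G,-)$ preserves kernels, the cohomology of $\opn{Hom}_A(G,X)$ at every degree $n \leq k$ agrees with that of $\opn{Hom}_A(G,Y_k)$ and is therefore zero. Letting $k$ vary over all integers gives acyclicity of $\opn{Hom}_A(G,X)$, proving $X$ is $\mathcal{GP}$-acyclic. The only real obstacle is the inductive step for the cocycles; once this is in place, everything else is a straightforward assembly of \cref{prop:flat-is-gp} and a truncation argument.
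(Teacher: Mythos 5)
Your proof is correct and takes essentially the same approach as the paper: truncate $X$ at an arbitrary degree to get a bounded acyclic complex whose rightmost term (a cocycle) has finite projective dimension, apply \cref{prop:flat-is-gp} to conclude that the truncation is $\mathcal{GP}$-acyclic, and read off exactness of $\opn{Hom}_A(G,X)$ in low degrees. The only cosmetic difference is that you isolate the finiteness of $\projdim_A(Z^n)$ as a preliminary inductive lemma, while the paper folds the same observation into the main argument by noting directly that $\opn{Im}(d^{n+1}_X)$ admits a finite resolution by modules of finite projective dimension.
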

\begin{proof}
As $X$ is bounded below, there exist some $r \in \mathbb{Z}$ such that $X^n = 0$ for all $n<r$.
Let $G$ be a Gorenstein-projective $A$-module.
We must show that the complex $\opn{Hom}_A(G,X)$ is acyclic.
For any $n \in \mathbb{Z}$,
the fact that $X$ is bounded below implies the existence of an exact sequence of $A$-modules
\[
0 \to X^r \to X^{r+1} \to \dots \to X^{n-1} \to X^n \to X^{n+1} \xrightarrow{d^{n+1}_X} \opn{Im}(d^{n+1}_X) \to 0
\]
As by assumption each $X^i$ has finite projective dimension,
and as this sequence is exact, 
it follows that $\projdim_A(\opn{Im}(d^{n+1}_X) < \infty$.
Thus, we are in the situation of \cref{prop:flat-is-gp},
and it follows from it that the sequence
\begin{gather*}
0 \to \opn{Hom}_A(G,X^r) \to \opn{Hom}_A(G,X^{r+1}) \to \dots \to \opn{Hom}_A(G,X^{n-1}) \to\\ \opn{Hom}_A(G,X^n) \to \opn{Hom}_A(G,X^{n+1}) \to \opn{Hom}_A(G,\opn{Im}(d^{n+1}_X)) \to 0
\end{gather*}
is exact.
In particular, for any $n \in \mathbb{Z}$,
we see that the sequence
\[
\opn{Hom}_A(G,X^{n-1}) \to \opn{Hom}_A(G,X^n) \to \opn{Hom}_A(G,X^{n+1})
\]
is exact,
which shows that $\opn{Hom}_A(G,X)$ is acyclic,
and hence that $X$ is $\mathcal{GP}$-acyclic.
\end{proof}

\begin{cor}\label{cor:proj-is-gp-acyclic}
Over any ring, any acyclic bounded below complex of projective $A$-modules is $\mathcal{GP}$-acyclic.
\end{cor}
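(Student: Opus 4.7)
The plan is simply to invoke the preceding theorem. The corollary follows immediately because any projective $A$-module $P$ has $\projdim_A(P) = 0 < \infty$, so a bounded below acyclic complex of projective $A$-modules is in particular a bounded below acyclic complex with $\projdim_A(X^n) < \infty$ for every $n \in \mathbb{Z}$. The theorem then applies verbatim and yields the conclusion that such a complex is $\mathcal{GP}$-acyclic.

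There is no real obstacle here: the work has already been done in \cref{prop:flat-is-gp} (the bounded case) and in the preceding theorem (the passage to bounded below complexes via truncation and the exact sequence terminating in $\opn{Im}(d^{n+1}_X)$). The corollary is purely a matter of specializing the hypothesis from ``finite projective dimension'' to ``projective.''
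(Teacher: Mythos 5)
Your proposal is correct and is exactly the intended argument: the corollary is stated in the paper without proof as an immediate specialization of the preceding theorem, since projective modules have projective dimension zero. Nothing more is needed.
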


Since projective modules are Gorenstein-projectives,
combining this with \cref{cor:GP-cont} we obtain:
\begin{cor}\label{cor:cont-criteria}
Let $A$ be a ring, 
and let $P$ be an acyclic bounded below complex of projective $A$-modules. Then $P$ is contractible if and only if all its cocycles $\ker(d^n_P)$ are Gorenstein-projective $A$-modules.
\end{cor}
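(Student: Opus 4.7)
The plan is to combine the two preceding results in this section, namely \cref{cor:proj-is-gp-acyclic} and \cref{cor:GP-cont}, after observing that any projective module is Gorenstein-projective, so the complex $P$ is simultaneously a complex of projectives and a complex of Gorenstein-projectives.

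For the ``only if'' direction, I would argue directly: if $P$ is contractible, then for every $n$ the short exact sequence
\[
0 \to \ker(d^n_P) \to P^n \to \ker(d^{n+1}_P) \to 0
\]
splits, so $\ker(d^n_P)$ is a direct summand of the projective module $P^n$, hence projective, and in particular Gorenstein-projective. This is essentially the ``only if'' half of \cref{prop:contr} applied with $\mathcal{P} = \mathcal{GP}$, so no new work is needed.

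For the ``if'' direction, I would apply \cref{cor:proj-is-gp-acyclic} to conclude that $P$ is $\mathcal{GP}$-acyclic, using the hypothesis that $P$ is acyclic, bounded below, and consists of projectives. Since every $P^n$ is projective and therefore Gorenstein-projective, $P$ is in fact a $\mathcal{GP}$-acyclic complex of Gorenstein-projective modules, and the assumption that all cocycles are Gorenstein-projective then lets \cref{cor:GP-cont} deliver contractibility.

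I do not expect any real obstacle: the genuine content lies in \cref{cor:proj-is-gp-acyclic} (which upgrades ordinary acyclicity of a bounded below complex of projectives to $\mathcal{GP}$-acyclicity) and \cref{cor:GP-cont} (which characterises contractibility of $\mathcal{GP}$-acyclic complexes of Gorenstein-projectives by their cocycles). Once these are in place, the stated corollary is a direct juxtaposition of the two, and the only conceptual point to flag is the simple but essential inclusion of projectives among Gorenstein-projectives, which is what makes the two results compose.
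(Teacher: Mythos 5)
Your argument is correct and follows exactly the paper's route: the paper derives this corollary by noting that projective modules are Gorenstein-projective and combining \cref{cor:proj-is-gp-acyclic} with \cref{cor:GP-cont}, which is precisely what you do. The extra detail you give for the ``only if'' direction is subsumed in \cref{cor:GP-cont} (via \cref{prop:contr}), so no additional work was needed there, but it does no harm.
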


Here is the main result of this section and of this paper:

\begin{thm}\label{thm:main}
Let $A$ be a ring,
and consider the following statements:
\begin{enumerate}
\item $\opn{FPD}(A) < \infty$.
\item Every bounded below $\mathcal{GP}$-acyclic complex of Gorenstein-projective $A$-modules is contractible.
\item Every bounded below acyclic complex of projective $A$-modules is contractible.
\end{enumerate}
Then it always holds that (1) $\implies$ (2) $\implies$ (3).
If $A$ is two-sided noetherian, and has a dualizing complex,
then (3) $\implies$ (1),
so that all three conditions are equivalent.
\end{thm}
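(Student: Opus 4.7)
The theorem is essentially an assembly argument: all the technical effort has been front-loaded into the preceding two sections (and into \cite{ShGor}), so the proof itself has no genuinely new ingredients beyond recognizing which earlier result to cite at each step. The approach is to verify the three implications in the order (1)~$\implies$~(2)~$\implies$~(3), and then invoke the existing literature for the converse (3)~$\implies$~(1) under the noetherian plus dualizing complex hypothesis.

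First, for (1)~$\implies$~(2), I would simply quote \cref{thm:FPDimplyCont}, whose conclusion is verbatim the statement of~(2). Nothing else is required at this step, and the dualizing complex hypothesis is not used.

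Second, for (2)~$\implies$~(3), the argument is only two lines. Let $P$ be a bounded below acyclic complex of projective $A$-modules. Since every projective module is Gorenstein-projective, $P$ is a bounded below complex of Gorenstein-projectives. By \cref{cor:proj-is-gp-acyclic}, $P$ is moreover $\mathcal{GP}$-acyclic. Hence $P$ satisfies the hypotheses of~(2), and we conclude that $P$ is contractible. Again, this holds over any ring, with no assumption on dualizing complexes.

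Third, for (3)~$\implies$~(1) under the assumption that $A$ is two-sided noetherian and admits a dualizing complex, I would cite \cite{ShGor} directly, as the author himself flags that this implication was already established there.

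I expect the main obstacle in the chain to have been the (2)~$\implies$~(3) step, even though its written form is trivial. A priori $\mathcal{GP}$-acyclicity is strictly stronger than acyclicity, so it is not obvious that hypothesis (2) has anything to say about a plain acyclic complex of projectives; the implication only becomes routine once one has the non-trivial fact, proved in the previous section via \cref{prop:flat-is-gp} and its bounded-below extension (which rest on Holm's vanishing $\opn{Ext}^1_A(G,N)=0$ for $G$ Gorenstein-projective and $N$ of finite projective dimension), that bounded below acyclic complexes whose terms have finite projective dimension are automatically $\mathcal{GP}$-acyclic. With that lemma in hand, the theorem collapses to the assembly above.
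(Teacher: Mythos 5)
Your proposal is correct and matches the paper's proof exactly: (1)~$\implies$~(2) cites \cref{thm:FPDimplyCont}, (2)~$\implies$~(3) combines \cref{cor:proj-is-gp-acyclic} with the fact that projectives are Gorenstein-projective, and (3)~$\implies$~(1) cites \cite{ShGor}. Your remark that the real content of (2)~$\implies$~(3) is hidden in \cref{cor:proj-is-gp-acyclic} is also accurate.
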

\begin{proof}
The fact that (1) $\implies$ (2) is \cref{thm:FPDimplyCont},
and that (2) $\implies$ (3) follows from \cref{cor:proj-is-gp-acyclic}.
Finally, if $A$ is two-sided noetherian and has a dualizing complex,
then by \cite[Theorem 5.1]{ShGor}, 
we have that (3) $\implies$ (1).
\end{proof}

\begin{rem}
In general, it could happen that (3) holds but (1) fails.
See \cite[Theorem 5.1(d)]{ShGor} for details.
We do not know if for an arbitrary ring (3) $\implies$ (2).
See \cite{Posi} for some examples of (noncommutative non-noetherian) rings over which (3) fails.
\end{rem}

\section{Gorenstein-injective generation and acyclic complexes}

In this section we give a sufficient condition for the finitistic dimension conjecture to hold over Artin algebras.
Over an Artin algebra, the flat and projective modules coincide,
so it might be interesting to note (although this result is not used in the sequel) that the same is true for Gorenstein-projectives and Gorenstein flat modules:
\begin{prop}
Over an Artin algebra $A$,
the classes of Gorenstein-projective and Gorenstein-flat modules coincide.
\end{prop}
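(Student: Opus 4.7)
\emph{Proof sketch.} The containment of Gorenstein-projective modules in Gorenstein-flat modules is valid over any right coherent ring by \cite[Theorem 3.6]{Holm}, and an Artin algebra is right noetherian, so this direction is immediate.

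For the reverse inclusion, the plan rests on two special features of Artin algebras. First, an Artin algebra is a perfect ring, so by Bass's theorem the classes of flat and projective $A$-modules coincide. Consequently, if $M$ is Gorenstein-flat and $F$ is an acyclic complex of flat modules witnessing this, then $F$ is automatically an acyclic complex of projectives, and it remains only to check that $\Hom_A(F,Q)$ is acyclic for every projective $A$-module $Q$. Second, the dualizing module $\omega$ of $A$ is a finitely generated injective cogenerator of the category of right $A$-modules, and the standard Matlis duality $\mathbb{D}$ of $A$ satisfies $\mathbb{D}\mathbb{D}(A)\cong A$.

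The key calculation I would use is then the natural isomorphism
\[
\Hom_A(F,A) \;\cong\; \mathbb{D}(\omega \otimes_A F),
\]
obtained from tensor--hom adjunction together with $\mathbb{D}\omega \cong A$. Since $\omega$ is an injective right $A$-module, the Gorenstein-flat hypothesis yields that $\omega \otimes_A F$ is acyclic, and exactness of $\mathbb{D}$ then forces $\Hom_A(F,A)$ to be acyclic as well. Taking finite direct sums and summands handles $\Hom_A(F,Q)$ for every finitely generated projective $Q$, since any such $Q$ is a summand of some $A^n$.

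The main obstacle I expect lies in extending this acyclicity from $\Hom_A(F,A)$ to $\Hom_A(F,Q)$ for an arbitrary, possibly infinitely generated, projective $Q$, since the terms $F^n$ may themselves be infinitely generated and $\Hom_A(F,-)$ need not commute with direct sums in the second argument. My plan to handle this is to combine the structure theorem for projective modules over a perfect ring (every projective decomposes as a direct sum of finitely generated indecomposable summands) with the freedom in the Gorenstein-flat hypothesis to use arbitrary injective right $A$-modules rather than only $\omega$, thereby matching the acyclicity statement against the prescribed direct-sum decomposition of $Q$.
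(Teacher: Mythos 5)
Your proposal and the paper's proof both begin from the same reduction: over an Artin algebra flat $=$ projective, so a Gorenstein-flat resp.\ Gorenstein-projective module is witnessed by the same underlying acyclic complex of projectives, and the question becomes whether the two auxiliary acyclicity conditions (tensor against injectives vs.\ Hom into projectives) coincide. Both also pivot on the dualizing module. The paper then settles the equivalence in one stroke by observing that every right injective is a direct summand of a sum of copies of the dualizing module $R$, and citing \cite[Lemma 1.7]{JorRes}, which states precisely that for a complex $P$ of projectives, $\Hom_A(P,Q)$ is exact for all projective $Q$ if and only if $R\otimes_A P$ is exact.

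Your proof, by contrast, splits the claim into two containments (one via Holm, the other by hand) and tries to rebuild the content of Jørgensen's lemma from the isomorphism $\Hom_A(F,A)\cong\mathbb{D}(\omega\otimes_A F)$. That isomorphism is correct and handles finitely generated projectives, but the passage to an arbitrary projective $Q$ is a genuine gap, and your proposed repair does not close it. Decomposing $Q=\bigoplus_i Q_i$ over a perfect ring and then dualizing does not produce the right object: $\mathbb{D}$ converts direct sums to \emph{products}, so from the hypothesis $I\otimes_A F$ acyclic for all injective $I$ (e.g.\ $I=\omega^{(J)}$ or a product $\prod_i\mathbb{D}(Q_i)$) you obtain acyclicity of $\Hom_A(F,A^J)$ and $\Hom_A(F,\prod_i Q_i)$, i.e.\ Hom into \emph{products} of projectives, not into the direct sum $\bigoplus_i Q_i$. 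Over an Artin algebra $A^{(I)}$ is a submodule but generally not a direct summand of $A^I$, and $\mathbb{D}(\prod_i I_i)\not\cong\bigoplus_i\mathbb{D}(I_i)$ for infinite $I$, so the ``freedom to use arbitrary injectives'' does not let you hit an infinitely generated $Q$. This is exactly the nontrivial content of \cite[Lemma 1.7]{JorRes} that the paper invokes and that your sketch leaves unproved; without it, the argument establishes only that Gorenstein-flat totally acyclic complexes of projectives satisfy the Hom-condition against \emph{finitely generated} projectives, which is weaker than the definition requires.
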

\begin{proof}
Since over an Artin algebra $A$, flat modules are projective,
given an acyclic complex $P$ of projective $A$-modules,
it is thus enough to show that $\opn{Hom}_A(P,Q)$ is exact for all left projective $A$-modules $Q$ if and only if $I\otimes_A P$ is exact for all right injective $A$-modules $I$.
To see this, note that if $R$ is the dualizing module of $A$,
then any such $I$ is a direct summand of direct sums of copies of $R$,
so it is enough to show that $\opn{Hom}_A(P,Q)$ is exact for all left projective $A$-modules $Q$ if and only if $R\otimes_A P$ is exact,
and this is exactly \cite[Lemma 1.7]{JorRes}.
\end{proof}

In \cite[Theorem 4.8]{InKr},
the authors constructed the covariant Grothendieck duality,
which is an equivalence $\cat{K}(\opn{Proj}(A)) \to \cat{K}(\opn{Inj}(A))$
over any two-sided noetherian ring with a dualizing complex.
The next result is a Gorenstein version of this result for Artin algebras.

\begin{prop}\label{prop:GrothEquiv}
Let $A$ be an Artin algebra with a dualizing module $R$.
\begin{enumerate}
\item The functor 
\[
R\otimes_A -: \mathcal{GP}(A) \to \mathcal{GI}(A)
\]
is an equivalence of categories with a quasi-inverse 
\[
\opn{Hom}_A(R,-):\mathcal{GI}(A) \to \mathcal{GP}(A).
\]`
\item The functor 
\[
R\otimes_A -: \cat{K}(\mathcal{GP}(A)) \to \cat{K}(\mathcal{GI}(A))
\]
is an equivalence of categories with a quasi-inverse 
\[
\opn{Hom}_A(R,-):\cat{K}(\mathcal{GI}(A)) \to \cat{K}(\mathcal{GP}(A)).
\]
\end{enumerate}
\end{prop}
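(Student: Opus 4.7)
The plan is to lift the module-level covariant Grothendieck duality between $\opn{Proj}(A)$ and $\opn{Inj}(A)$ for the Artin algebra $A$ (with inverse equivalences $R \otimes_A -$ and $\opn{Hom}_A(R, -)$) from projectives and injectives to Gorenstein-projectives and Gorenstein-injectives, by applying these functors termwise to totally acyclic resolutions. Once part (1) is established, part (2) follows at once, since any equivalence of additive categories induces an equivalence of their homotopy categories of cochain complexes.

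For (1), I would take $G \in \mathcal{GP}(A)$ together with a totally acyclic complex of projectives $P$ satisfying $G = \ker(d^0_P)$, and set $I := R \otimes_A P$. Each term $I^n$ is injective by the module-level equivalence, and \cite[Lemma 1.7]{JorRes} (already invoked in the preceding proposition) gives acyclicity of $I$ from the total acyclicity of $P$. For any injective $A$-module $E$, I would write $E \cong R \otimes_A Q$ with $Q$ projective; fully faithfulness of the module-level equivalence yields
\[
\opn{Hom}_A(E, I) \cong \opn{Hom}_A(Q, P),
\]
which is acyclic because $P$ is totally acyclic. Hence $I$ is a totally acyclic complex of injectives.

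The step I expect to be the main obstacle is identifying $\ker(d^0_I)$ with $R \otimes_A G$, which reduces to the Tor-vanishing $\opn{Tor}^A_i(R, K) = 0$ for every Gorenstein-projective $K$ and every $i \geq 1$. I would derive this directly from the acyclicity of $R \otimes_A P$ just established: the negative-degree truncation of a totally acyclic resolution of $K$ is a projective resolution of $K$, and its $R \otimes_A -$ cohomology both computes $\opn{Tor}^A_*(R, K)$ and vanishes by that acyclicity. With the Tor-vanishing in hand, each short exact sequence $0 \to \ker(d^n_P) \to P^n \to \ker(d^{n+1}_P) \to 0$ stays short exact after applying $R \otimes_A -$, so splicing yields $\ker(d^0_I) \cong R \otimes_A G$. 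Thus $R \otimes_A G \in \mathcal{GI}(A)$, and the dual argument shows $\opn{Hom}_A(R, -)$ sends $\mathcal{GI}(A)$ into $\mathcal{GP}(A)$. To verify the two functors are quasi-inverse, applying $\opn{Hom}_A(R, -)$ termwise to $I$ recovers $P$ (the unit is already an iso on projectives), and left exactness lets $\opn{Hom}_A(R, -)$ commute with $\ker(d^0)$, producing the unit isomorphism $G \iso \opn{Hom}_A(R, R \otimes_A G)$; the counit is symmetric.
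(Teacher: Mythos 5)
Your proof is correct and gives a genuinely self-contained argument, whereas the paper simply invokes \cite[Proposition X.1.4]{BelReit} for part~(1) and then passes to homotopy categories exactly as you do for part~(2). The route you take -- applying the Nakayama equivalence $R\otimes_A - \colon \opn{Proj}(A)\rightleftarrows\opn{Inj}(A) \colon \opn{Hom}_A(R,-)$ termwise to totally acyclic complexes -- is almost certainly the content of the cited Beligiannis--Reiten result, so the mathematics is convergent; what your write-up buys is that the reader does not need to chase down the reference, and it makes transparent exactly which module-level facts are being used (the Nakayama equivalence on all of $\opn{Proj}(A)$ and $\opn{Inj}(A)$, not merely on finitely generated objects, which relies on $R$ being finitely generated on both sides and on $A$ being two-sided noetherian; and \cite[Lemma 1.7]{JorRes}, which the paper itself cites in the preceding proposition). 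Two small points worth tightening if this were to be written out in full: the identification $R\otimes_A G \cong \ker(d^0_I)$ needs the $\opn{Tor}$-vanishing applied to the cocycle $\opn{Im}(d^0_P)$, not just to $G$ itself (you implicitly handle this since all cocycles of $P$ are Gorenstein-projective, but it should be said); and the final claim that the abstract isomorphism $G\cong\opn{Hom}_A(R,R\otimes_A G)$ is the adjunction unit requires a short naturality chase comparing $\eta_G$ with $\eta_{P^0}$ through the kernel inclusion. Also note that the dual direction $\opn{Hom}_A(R,-)\colon\mathcal{GI}(A)\to\mathcal{GP}(A)$ is actually easier than the one you spelled out: since $R$ itself is an injective left $A$-module, acyclicity of $\opn{Hom}_A(R,I)$ is immediate from total acyclicity of $I$, and left exactness handles the cocycle identification without any $\opn{Ext}$-vanishing.
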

\begin{proof}
Since $R\otimes_A -$ and $\opn{Hom}_A(R,-)$ are additive functors,
clearly the second statement follows from the first one.
The first statement is contained in \cite[Proposition X.1.4]{BelReit}.
\end{proof}

Given a triangulated category $\mathcal{T}$ which has infinite coproducts, and given a non-empty set of objects $\mathbf{S}$ in $\mathcal{T}$, 
recall that the localizing subcategory generated by $\mathbf{S}$,
denoted by $\opn{Loc}_{\mathcal{T}}(\mathbf{S})$ is the smallest triangulated subcategory of $\mathcal{T}$ which contains $\mathbf{S}$ and is closed under infinite coproducts.
In case $\opn{Loc}_{\mathcal{T}}(\mathbf{S}) = \mathcal{T}$,
we say that $\mathbf{S}$ generates $\mathcal{T}$.

We have seen in \cref{cor:has-prod} that over any ring $A$,
the Gorenstein-injective derived category 
$\cat{D}_{\mathcal{GI}}(A)$ has arbitrary coproducts,
so it makes sense to talk about localizing subcategories of it.
In \cite[Theorem 4.3]{Rickard},
it was shown that for a finite dimensional algebra $A$ over a field,
if $\opn{Loc}_{\cat{D}(A)}(\opn{Inj}(A)) = \cat{D}(A)$,
then $\opn{FPD}(A)<\infty$.
This result was further generalized to larger classes of rings in \cite[Theorem 1.1]{Zhang} and \cite[Corollary 5.3]{ShFinDim}. 
Our next result (or rather a corollary of it) is a Gorenstein version of this fact.

\begin{thm}\label{thm:generation}
Let $A$ be an Artin algebra,
and suppose that 
\[
\opn{Loc}_{\cat{D}_{\mathcal{GI}}(A)}(\mathcal{GI}(A)) = \cat{D}_{\mathcal{GI}}(A).
\]
Then any bounded below $\mathcal{GP}$-acyclic complex of Gorenstein-projective modules is contractible.
\end{thm}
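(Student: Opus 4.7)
The plan is to invoke the covariant Grothendieck equivalence of \cref{prop:GrothEquiv}(2) to convert the statement into the dual problem for bounded below complexes of Gorenstein-injectives, and then to use the generation hypothesis in $\cat{D}_{\mathcal{GI}}(A)$. Given a bounded below $\mathcal{GP}$-acyclic complex $X$ of Gorenstein-projectives, I would set $Y := R \otimes_A X$; this is a bounded below complex of Gorenstein-injective $A$-modules, and by \cref{prop:GrothEquiv}(2), $X$ is contractible in $\cat{K}(A)$ if and only if $Y$ is. The approach then splits into two halves: first show that $Y$ is $\mathcal{GI}$-acyclic, and then upgrade this to contractibility in $\cat{K}(A)$.

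For the first half I would introduce the full subcategory
\[
\mathcal{S} := \bigl\{ M \in \cat{D}_{\mathcal{GI}}(A) : \opn{Hom}_{\cat{D}_{\mathcal{GI}}(A)}(M, Y[n]) = 0 \text{ for all } n \in \mathbb{Z} \bigr\}
\]
of $\cat{D}_{\mathcal{GI}}(A)$; it is triangulated and closed under coproducts by \cref{cor:has-prod}(2), hence localizing. Once $\mathcal{GI}(A) \subseteq \mathcal{S}$ is established, the generation hypothesis forces $\mathcal{S} = \cat{D}_{\mathcal{GI}}(A)$, so in particular $Y \in \mathcal{S}$, giving $Y \cong 0$ in $\cat{D}_{\mathcal{GI}}(A)$, equivalently that $Y$ is $\mathcal{GI}$-acyclic. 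To verify this containment, fix $J \in \mathcal{GI}(A)$, write $J = R \otimes_A G$ with $G := \opn{Hom}_A(R, J) \in \mathcal{GP}(A)$, and apply the fully faithful part of \cref{prop:GrothEquiv}(1) termwise to obtain a natural isomorphism of complexes $\opn{Hom}_A(J, Y) \cong \opn{Hom}_A(G, X)$, whose right-hand side is acyclic by $\mathcal{GP}$-acyclicity of $X$. To translate this into vanishing of $\opn{Hom}_{\cat{D}_{\mathcal{GI}}(A)}(J, Y[n])$, I would prove that $Y$ enjoys a $\mathcal{GI}$-$K$-injectivity property, namely that any chain map from a $\mathcal{GI}$-acyclic complex $N$ into $Y$ is null-homotopic; since $Y$ is bounded below with Gorenstein-injective terms, this follows by the standard degree-by-degree construction of a contracting homotopy, whose inductive step is precisely the extension property encoded in the acyclicity of $\opn{Hom}_A(N, Y^n)$.

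For the second half, I would note that an injective cogenerator of $\opn{Mod}(A)$ lies in $\mathcal{GI}(A)$, so $\mathcal{GI}$-acyclicity of $Y$ already implies acyclicity; combined with $Y$ being bounded below and the coresolving property of $\mathcal{GI}(A)$, an induction from the leftmost nonzero term of $Y$ shows that every cocycle $\ker(d^n_Y)$ is Gorenstein-injective. The dual of \cref{cor:GP-cont} then gives that $Y$ is contractible, so $X$ is contractible by \cref{prop:GrothEquiv}(2). The principal obstacle I foresee is the $\mathcal{GI}$-$K$-injectivity step in the previous paragraph: it is exactly there that the bounded-below hypothesis on $X$ (hence on $Y$) becomes essential, providing the base case for the inductive construction of the contracting homotopy, without which the passage from Hom-complex acyclicity to vanishing of morphisms in $\cat{D}_{\mathcal{GI}}(A)$ would fail.
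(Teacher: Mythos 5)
Your proposal follows the same strategy as the paper's proof: transport the problem to $\cat{K}(\mathcal{GI}(A))$ via the covariant Grothendieck equivalence of \cref{prop:GrothEquiv}, show that $Y := R\otimes_A X$ is right-orthogonal in $\cat{D}_{\mathcal{GI}}(A)$ to every Gorenstein-injective module, invoke the generation hypothesis through a localizing subcategory to conclude $Y\cong 0$ in $\cat{D}_{\mathcal{GI}}(A)$, and then upgrade this to contractibility in $\cat{K}(A)$. The overall architecture is identical, and the first half is essentially the paper's argument rewritten; where the paper cites the dual of \cite[Proposition 2.8]{GaoZhang} to identify $\opn{Hom}_{\cat{K}(A)}(J,Y)$ with $\opn{Hom}_{\cat{D}_{\mathcal{GI}}(A)}(J,Y)$, you instead sketch a proof of the underlying ``$\mathcal{GI}$-$K$-injectivity'' of $Y$ by the degree-by-degree homotopy construction, which is a correct way to reprove that cited fact and correctly locates the role of the bounded-below hypothesis.

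The one place you genuinely diverge is the final step. Having shown that $Y$ is $\mathcal{GI}$-acyclic, the paper simply applies the dual of \cite[Proposition 2.8]{GaoZhang} once more to conclude $Y\cong 0$ in $\cat{K}(A)$; you instead pass through acyclicity of $Y$ (via an injective cogenerator, which is fine), then argue that all cocycles of $Y$ are Gorenstein-injective using the coresolving property of $\mathcal{GI}(A)$ (this is Holm's Theorem 2.6, so the input is legitimate, though you should cite it), and finally invoke the dual of \cref{cor:GP-cont}. This works, but it is a detour: the $\mathcal{GI}$-$K$-injectivity of $Y$ that you already established in the first half, applied to the identity map $\opn{id}_Y\colon Y\to Y$ (which is a chain map from a $\mathcal{GI}$-acyclic complex into $Y$), directly yields that $\opn{id}_Y$ is null-homotopic, hence $Y\cong 0$ in $\cat{K}(A)$, with no need for the coresolving property or the dual of \cref{cor:GP-cont}. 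So your proof is correct, but the ending imports an extra structural fact about $\mathcal{GI}(A)$ that the argument already renders unnecessary.
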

\begin{proof}
Let $P$ be a bounded below $\mathcal{GP}$-acyclic complex of Gorenstein-projective modules.
Let $n \in \mathbb{Z}$, 
and let $I$ be some Gorenstein-injective $A$-module.
By \cref{prop:GrothEquiv}, $F = \opn{Hom}_A(R,I)$ is a Gorenstein-Projective $A$-module.
The fact that $P$ is $\mathcal{GP}$-acyclic implies that
\[
\opn{Hom}_{\cat{D}_{\mathcal{GP}}(A)}(F[n],P) = 0
\]
By \cite[Proposition 2.8]{GaoZhang},
there is an equality
\[
\opn{Hom}_{\cat{D}_{\mathcal{GP}}(A)}(F[n],P) = \opn{Hom}_{\cat{K}(A)}(F[n],P), 
\]
and by \cref{prop:GrothEquiv},
we have that
\[
\opn{Hom}_{\cat{K}(A)}(F[n],P) = \opn{Hom}_{\cat{K}(A)}(R\otimes_A F[n],R\otimes_A P) = \opn{Hom}_{\cat{K}(A)}(I[n],R\otimes_A P).
\]
Since $P$ is a bounded-below complex of Gorenstein-projectives,
it follows from \cref{prop:GrothEquiv} that $R\otimes_A P$ is a bounded-below complex of Gorenstein-injectives.
By the dual result of \cite[Proposition 2.8]{GaoZhang},
we obtain that
\[
0 =  \opn{Hom}_{\cat{K}(A)}(I[n],R\otimes_A P) = \opn{Hom}_{\cat{D}_{\mathcal{GI}}(A)}(I[n],R\otimes_A P).
\]
Consider the following subcategory of $\cat{D}_{\mathcal{GI}}(A)$:
\[
\mathcal{T} = \{X \in \cat{D}_{\mathcal{GI}}(A) \mid \forall n \in \mathbb{Z}, \opn{Hom}_{\cat{D}_{\mathcal{GI}}(A)}(X[n],R\otimes_A P) = 0 \}.
\]
It is clear that $\mathcal{T}$ is a localizing subcategory of $\cat{D}_{\mathcal{GI}}(A)$,
and we have seen that it contains all Gorenstein-injective $A$-modules.
Hence, by the generation assumption,
it follows that $\mathcal{T} = \cat{D}_{\mathcal{GI}}(A)$,
so in particular $R\otimes_A P \in \mathcal{T}$,
which shows that $R\otimes_A P \cong 0$ in $\cat{D}_{\mathcal{GI}}(A)$.
Since it is a bounded-below complex of Gorenstein-injectives,
again, the dual of \cite[Proposition 2.8]{GaoZhang} implies that
$R\otimes_A P \cong 0$ in $\cat{K}(A)$,
and hence, \cref{prop:GrothEquiv} shows that
\[
P \cong \opn{Hom}_A(R,R\otimes_A P) \cong 0
\]
in $\cat{K}(A)$, as claimed.
\end{proof}

\begin{cor}\label{cor:gener}
Let $A$ be an Artin algebra,
and suppose that 
\[
\opn{Loc}_{\cat{D}_{\mathcal{GI}}(A)}(\mathcal{GI}(A)) = \cat{D}_{\mathcal{GI}}(A).
\]
Then $\opn{FPD}(A) < \infty$.    
\end{cor}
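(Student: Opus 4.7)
The plan is to derive this corollary immediately by chaining Theorem \ref{thm:generation} with Theorem \ref{thm:main}. The hypothesis $\opn{Loc}_{\cat{D}_{\mathcal{GI}}(A)}(\mathcal{GI}(A)) = \cat{D}_{\mathcal{GI}}(A)$ is precisely the assumption of Theorem \ref{thm:generation}, and the conclusion of that theorem, namely that every bounded below $\mathcal{GP}$-acyclic complex of Gorenstein-projective $A$-modules is contractible, is word-for-word condition (2) of Theorem \ref{thm:main}.

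Next I would verify that Theorem \ref{thm:main} is indeed applicable. Since $A$ is an Artin algebra, $A$ is two-sided noetherian, and as recorded in the subsection on dualizing complexes, any Artin algebra admits a dualizing module, hence in particular a dualizing complex in the sense used in this paper. We are therefore in the regime in which Theorem \ref{thm:main} gives the equivalence of conditions (1), (2) and (3). The implication (2) $\implies$ (1) then yields $\opn{FPD}(A) < \infty$, as required.

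There is no substantive obstacle to overcome: all of the real work has already been done in Theorem \ref{thm:generation}, which exploits the Grothendieck-duality equivalence $R \otimes_A -$ of \cref{prop:GrothEquiv} together with the generation hypothesis, and in Theorem \ref{thm:main}, whose nontrivial direction (3) $\implies$ (1) is quoted from \cite{ShGor}. The only thing to be careful about is that the phrasing of the conclusion of Theorem \ref{thm:generation} matches condition (2) of Theorem \ref{thm:main} verbatim, so no intermediate reformulation is needed before applying the latter.
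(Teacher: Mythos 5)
Your proposal matches the paper's proof exactly: both chain \cref{thm:generation} with \cref{thm:main}, using that an Artin algebra is two-sided noetherian with a dualizing complex so that the equivalence in \cref{thm:main} applies. Your verification that the conclusion of \cref{thm:generation} is verbatim condition (2) of \cref{thm:main} is correct and requires no further reformulation.
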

\begin{proof}
This follows from \cref{thm:generation} and \cref{thm:main}.
\end{proof}

\bibliographystyle{abbrv}
\bibliography{main}

\end{document}